\documentclass[11pt]{amsart}
\usepackage{etex}
\usepackage{diagbox}
\usepackage{array}
\usepackage{graphicx, pictex}
\usepackage{stmaryrd}
\usepackage{changepage}
\usepackage{float}
\restylefloat{table}
\usepackage{multirow}
\usepackage[dvipsnames]{xcolor}
\usepackage{amsmath,amssymb,amsthm,mathrsfs}
\allowdisplaybreaks
\usepackage{yfonts}


\renewcommand{\d}{\partial}



\def\C{{\mathcal{C}}}

\def\d{\Omega}

\def\du#1#2#3{\overset{#3}{\underset{#2}{#1}}}
\def\Forall{\quad \hbox{ for all }}

\def\M{{\mathcal{M}}}

\newcommand{\tn}[1]{\lVert\kern-1pt\lvert{#1}\rvert\kern-1pt\rVert}
\def\<{{\langle}}
\def\>{{\rangle}}
\def\Forall{\quad \hbox{ for all }}



\def\C{{\mathcal{C}}}



\def\d{\Omega}

\def\Forall{\quad \hbox{ for all }}




%

\def\d{\Omega}

\def\Forall{\quad \hbox{ for all }}


\def\tb#1{{\|\kern-1pt| #1 \|\kern-1pt|}}
\def\nm2#1#2{\|#1\|_{2,\d_{#2}}}

\def\R{\mathbb{R}}
\def\P{\mathbb{P}}


 \theoremstyle{plain}
 \newtheorem{thm}{Theorem}[section]
 \numberwithin{equation}{section} 
 \numberwithin{figure}{section} 
 \theoremstyle{plain}
 
 \theoremstyle{plain}
 \theoremstyle{plain}
 \newtheorem{theorem}[thm]{Theorem}
 \theoremstyle{plain}
 \newtheorem{corollary}[thm]{Corollary}
\theoremstyle{plain}
 \newtheorem{remark}[thm]{Remark}
 \theoremstyle{plain}
 

\def\C{{\mathcal{C}}}

\def\M{{\mathcal{M}}}

\def\d{{\Omega}}

\def\Forall{\quad \hbox{ for all }}


\def\<{{\langle}}
\def\>{{\rangle}}
\def\R{\mathbb{R}}

\def\du#1#2#3{\overset{#3}{\underset{#2}{#1}}}

\usepackage{amssymb}

\begin{document}

\title[Connections Between  FD and FE  for Convection-Diffusion]
{Connections  Between Finite Difference  and Finite Element Approximations for  a Convection-Diffusion Problem}

\author{Constantin Bacuta}
\address{University of Delaware,
Mathematical Sciences,
501 Ewing Hall, 19716}
\email{bacuta@udel.edu}

\author{Cristina Bacuta}
\address{University of Delaware,
Mathematical Sciences,
501 Ewing Hall, 19716}
\email{crbacuta@udel.edu}

\keywords{Finite difference,  finite element, Petrov-Galerkin, upwinding, convection dominated problem, singularly perturbed problems}

\subjclass[2010]{35K57, 35Q35, 65F, 65H10, 65N06, 65N12, 65N22, 65N30, 74S05, 76R10, 76R50}
\thanks{The work was supported  by NSF-DMS 2011615}%


\begin{abstract} We consider a model convection-diffusion problem and \\ present useful connections between the finite differences and finite element discretization methods. We  introduce a general upwinding Petrov-Galerkin discretization based on  bubble modification of the test space and connect the method with the general upwinding approach used in finite difference discretization. We  write the finite difference and the  finite element  systems such that  the two corresponding linear systems have the same stiffness matrices, and compare the right hand side load vectors for the two methods. This new approach allows for improving well known upwinding finite difference methods and for obtaining new error estimates. We prove that  the exponential bubble Petrov-Galerkin discretization can recover the interpolant of the exact solution. As a consequence, we estimate the closeness of the related finite difference solutions  to the interpolant. The ideas we present in this work, can lead to building efficient new discretization methods for multidimensional convection dominated problems. 

\end{abstract}
\maketitle

\section{Introduction}

We start with the model of a  singularly perturbed  convection diffusion problem: Find $u=u(x)$  on $[0, 1]$ such that
\begin{equation}\label{eq:1d-model}
\begin{cases}-\varepsilon u''(x)+ \kappa\, u'(x)=f(x),& 0<x<1\\ u(0)=0, \ u(1)=0,\end{cases}
\end{equation} 
where $\varepsilon$ and $\kappa$ are positive constants. In this paper, we consider the convection dominated case, i.e.  $\varepsilon \ll 1$. Here, the function $f$ is given and assumed to be continuous  on $[0, 1]$.  Without loss of generality, we  will further  assume that $\kappa=1$. 

The model problem \eqref{eq:1d-model} and its multi-dimensional  variants arise  when solving   heat transfer problems in thin domains, as well as when using small step sizes in implicit time discretizations of parabolic convection diffusion type problems, see \cite{Lin-Stynes12}. The solutions to these problems are characterized by  boundary layers,  see e.g., 
\cite{Dahmen-Welper-Cohen12, EEHJ96,  linssT10, Roos-Schopf15, zienkiewicz2014}. 
Approximating such solutions poses  numerical  challenges due to the $\varepsilon$-dependence of the  stability constants and  of the  error estimates.  
There is a tremendous amount of literature addressing  these types of problems, see e.g.  \cite{EEHJ96, linssT10, quarteroni-sacco-saleri07, Roos-Schopf15, zienkiewicz2014}. 
The goal of this paper is to use  connections between upwinding Finite Differences (FD) and certain Finite Element (FE)  discretizations of the model convection diffusion problem  \eqref{eq:1d-model}, to improve the performance of the upwimding FD methods, and to find new error  estimates  for both methods. 
We introduce a general upwinding FE Petrov-Galerkin (PG) discretization based on  bubble modification of the test space. The test space is modified by  using translations of a generating bubble function. 

For studying stability error estimates and connection with FD methods, we use the  concept of optimal trial norm, as presented in \cite{BHJ21, BHJ22, Broersen-StevensonDPGcd14, Chan-Heuer-Bui-Demkowicz14, Dahmen-Welper-Cohen12, demkowicz-gopalakrishnanDPG10, Barrett-Morton81}. We write the finite difference and the finite element systems  for uniformly distributed nodes such that the two corresponding linear systems have the same stiffness matrices, and compare the Right Hand Side (RHS)  load vectors for the two methods. The same technique is applied for the corresponding variational formulations of the FE and FD methods by finding a common bilinear form and by comparing the RHS functionals.  We emphasize that any upwinding FD method can be deduced from an FE  PG method by carefully selecting the generating bubble function and a quadrature to approximate the RHS dual vector of the FE  PG system. The approach allows for improving the performance of known upwinding FD approaches.  
In addition, we investigate a particular PG method based on an exponential  generating bubble function and prove that the method recovers the interpolant of the exact solution. This leads to new error estimates for the corresponding upwinding FD method.    

The rest of the paper is organized as follows. We review the upwinding FD method in Section \ref{sec:FD}, and  the FE discretization together with the concept of optimal trial space in Section \ref{sec:FE}. We introduce a general upwinding Petrov-Galerkin discretization method  and relate it with the upwinding FD method in Section \ref{sec:PG}. In Section  \ref{sec:Quad-Bubbles} and Section \ref{sec:Exponential-Bubbles}, we define particular test spaces based on quadratic bubbles and exponential type bubbles, respectively, and  connect the new PG methods with known upwinding FD methods. 
\section{Standard Finite Difference Discretization}\label{sec:FD} In this section, we review the standard upwinding 
FD discretization of  \eqref{eq:1d-model} on $[0,1]$ on uniform meshes.   We divide the interval into $n$  subintervals using the   uniformly distributed nodes  $x_j=h j$, with $j=0,1, \ldots, n$ and $h=\frac{1}{n}$, and consider the second order  finite difference approximation for $u''(x_j)$ and $u'(x_j)$ at the nodes $x_{j-1}, x_j$, and $x_{j+1}$, to obtain the linear system:

 \begin{equation}\label{FDs1}
  \left\{
 \begin{array}{rcl}
     u_0 & =0&\\ \\
 \displaystyle  -\varepsilon  \frac{ u_{j-1} - 2 u_j + u_{j+1}} {h^2 } + \frac {-u_{j-1} + u_{j+1}}{2h} & =f_j& \ j=\overline{1,n-1},\\  \\
     u_n & =0,&
\end{array} 
\right. 
\end{equation}
where $f_j=f(x_j)$.  Multiplying the generic equation in \eqref{FDs1} by $h$,  gives
 \begin{equation}\label{FDs2}
   \left\{
 \begin{array}{rcl}
     u_0 & =0&\\ 
 \displaystyle  \varepsilon  \frac{-u_{j-1} + 2 u_j - u_{j+1}} {h} + \frac {-u_{j-1} + u_{j+1}}{2} & =h\, f_j& \ j=\overline{1,n-1}.\\  
     u_n & =0.&
\end{array} 
\right. 
\end{equation}
Since the convection coefficient  $\kappa=1>0$,  the standard  FD {\it  upwinding method} for  discretizing  \eqref{eq:1d-model}, requires the {\it backward difference method} for approximating $u'(x_j)$ and leads  to the system 

 \begin{equation}\label{FDuw1}
  \left\{
 \begin{array}{rcl}
     u_0 & =0&\\ 
  \displaystyle  \varepsilon  \frac{-u_{j-1} + 2 u_j - u_{j+1}} {h} + ( -u_{j-1} + u_{j}) & =h\, f_j& \ j=\overline{1,n-1}.\\  
       u_n & =0.&
\end{array} 
\right. 
\end{equation}

 Using that 
\[
 u_{j} -u_{j-1} = \frac {-u_{j-1} + u_{j+1}}{2} + \frac{-u_{j-1} + 2 u_j - u_{j+1}} {2},   
\]
the system \eqref{FDuw1} becomes 
 \begin{equation}\label{FDUW1}
   \left\{
 \begin{array}{rcl}
     u_0 & =0&\\ 
 \displaystyle  \varepsilon \left (1 +\frac{h}{2 \varepsilon} \right)\frac{-u_{j-1} + 2 u_j - u_{j+1}} {h} + \frac {-u_{j-1} + u_{j+1}}{2} & =h\, f_j&\\  
     u_n & =0.&
\end{array} 
\right. 
\end{equation}
The quantity $\frac{h}{2 \varepsilon}$ is known as {\it the local Peclet number} and is denoted by 
\[
\P e=\frac{h}{2 \varepsilon}.
\] 
According to Section 12.5 of \cite{quarteroni-sacco-saleri07},  the upwinding scheme \eqref{FDuw1} can be viewed as a centered difference scheme for the convection term with a correction for the coefficient of the  diffusion term  by $\varepsilon \cdot \P e$. The correction process is known as adding  {\it  artificial diffusion} or  
 {\it numerical viscosity}. As presented in \cite{quarteroni-sacco-saleri07, roos-stynes-tobiska-96}, more general  discretization based on {\it artificial diffusion} can be written as follows: 
  \begin{equation}\label{FDUWgeneral}
   \left\{
 \begin{array}{rcl}
     u_0 & =0&\\ 
 \displaystyle  \varepsilon_h \ \frac{-u_{j-1} + 2 u_j - u_{j+1}} {h} + \frac {-u_{j-1} + u_{j+1}}{2} & =h\, f_j&\\  
     u_n & =0,&
\end{array} 
\right. 
\end{equation}
where, for a smooth function $\Phi: (0, \infty) \to   (0, \infty)$ with $\displaystyle \lim_{t\to 0}\Phi(t)=0$, 
\[
 \varepsilon_h = \varepsilon \left (1 + \Phi(\P e) \right ).
\]
We note  that $\Phi(\P e) =\P e$ corresponds to the standard upwinding method \eqref{FDUW1}  or \eqref{FDuw1}. 

For the general upwinding case,  from \eqref{FDUWgeneral},  we obtain the system
 \begin{equation}\label{1d-LSfd}
 \left (\frac{\varepsilon_h}{h}  S+ C \right )\, U =  F_{fd}, 
\end{equation}
where \(U,F\in\R^{n-1}\) and \(S, C \in\R^{(n-1)\times(n-1)}\) with:
 \[
 U:=\begin{bmatrix}u_1\\u_2\\\vdots\\u_{n-1}\end{bmatrix},\quad F_{fd}:= h\, \begin{bmatrix} f(x_1)\\ f(x_2)\\\vdots \\ f(x_{n-1})\end{bmatrix}\, \text{and} 
\]
 \begin{equation}\label{eq:SC}
 S:=\begin{bmatrix}2&-1\\-1&2&-1\\&\ddots&\ddots&\ddots\\&&-1&2&-1\\&&&-1&2\end{bmatrix},\quad 
 C:=\frac{1}{2}\begin{bmatrix}0&1\\-1&0&1\\&\ddots&\ddots&\ddots\\&&-1&0&1\\&&&-1&0\end{bmatrix}.
 \end{equation}
 Using the ``tridiagonal notation'', we have that
 \[
S=tridiag(-1, 2, -1),   \ C= tridiag\left (-\frac12, 0, \frac12 \right ), 
 \]
and the matrix of the finite difference system \eqref{1d-LSfd} is 
\begin{equation} \label{eq:M4CDfd}
M_{fd}= tridiag\left ( -\frac{\varepsilon_h}{h} - \frac{1}{2},\  \frac{2\, \varepsilon_h}{h},\  -\frac{\varepsilon_h}{h} + \frac{1}{2} \right ).
\end{equation}

In Section \ref{sec:PG}, we will see that the general FD discretization  \eqref{FDUWgeneral}, leading to \eqref{1d-LSfd}, relates with a Petrov-Galerkin method with a bubble test space.  

\section{Finite element  linear variational formulation and discrete optimal trial norm}\label{sec:FE}
For the finite element discretization, we  will use the following  notation:
\[ 
\begin{aligned}
a_0(u, v) & = \int_0^1 u'(x) v'(x) \, dx, \ (f, v) = \int_0^1  f(x) v(x) \, dx,\ \text{and}\\
b(v, u)& =\varepsilon\, a_0(u, v)+(u',v)  \ \text{for all} \ u,v \in V=Q=H^{1}_0(0,1).
\end{aligned}
 \]
A variational formulation of \eqref{eq:1d-model}, with $\kappa=1$, is: \\ Find $u \in Q:= H_0^1(0,1)$ such that
 \begin{equation}\label{VF1d}
b(v,u) = (f, v), \ \text{for all} \ v \in V=H^{1}_0(0,1).
\end{equation}
The existence and uniqueness of the solution of \eqref{VF1d} is well known, see e.g., \cite{bartels16, boffi-brezzi-fortin, braess, brenner-scott, demko23, ern-guermond, demko-oden}.

\subsection{Standard discretization with $C^0-P^1$ test and trial spaces}\label{sec:1d-lin-discrete}

We  divide the interval $[0,1]$ into $n$  equal length subintervals using the nodes $0=x_0<x_1<\cdots < x_n=1$ and denote  $h:=x_j - x_{j-1}=1/ n$. For the above uniformly distributed nodes, we define  the corresponding finite element discrete space  $\M_h$  as  the subspace of $Q=H^1_0(0,1)$, given by
 \[ 
 \M_h = \{ v_h \in V \mid v_h \text{ is linear on each } [x_j, x_{j + 1}]\},
 \]
  i.e., $\M_h$ is the space of all {\it continuous piecewise linear  functions} with respect to the given nodes, that {\it are zero at $x=0$ and $x=1$}.  We consider the nodal basis $\{ \varphi_j\}_{j = 1}^{n-1}$ with the standard defining property $\varphi_i(x_j ) = \delta_{ij}$.  
We couple the above discrete trial space with the  discrete  test space $V_h:=\M_h$.  
 Thus, the discrete  variational formulation of \eqref{VF1d} is: Find $u_h \in \M_h$ such that
 \begin{equation}\label{dVF}
b(v_h, u_h) = (f, v_h), \ \text{for all} \ v_h \in V_h.
\end{equation}
We look for  $u_h \in V_h$ with  the nodal basis expansion
 \[
 u_h := \sum_{i=1}^{n-1} u_i \varphi_i, \ \text{where} \ \ u_i=u_h(x_i).
 \]
If we consider the test functions $v_h=\varphi_j, j=1,2,\cdots,n-1$ in \eqref{dVF}, we  obtain  the following linear system 
 \begin{equation}\label{1d-LS}
 \left (\frac{\varepsilon}{h}  S+ C \right )\, U = F_{fe}, 
\end{equation}
where
 \[
 U:=\begin{bmatrix}u_1\\u_2\\\vdots\\u_{n-1}\end{bmatrix},\quad F_{fe}:=\begin{bmatrix}(f,\varphi_1)\\ (f,\varphi_2)\\\vdots \\ (f,\varphi_{n-1})\end{bmatrix}, 
 \]
and  \(S, C \in\R^{(n-1)\times(n-1)}\) are given in \eqref{eq:SC}. 
 \subsection{Optimal discrete  trial norms } \label{sec:optimalnorm-linears}
For  $V=Q=H^1_0(0,1)$, we consider the  standard inner product  given by  $a_0(u,v) = (u,v)_V = (u',v')$,  and let  $\M_h, V_h $  be the standard space of continuous piecewise linear functions 
\[
\M_h=V_h=span \{\varphi_1, \cdots,\varphi_{n-1}\}. 
\]
For the purpose of error analysis, on $V$ and  $V_h$, we consider the standard norm induced by $a_0(\cdot,\cdot)$, i.e,  $ |v|^2:=a_0(v,v)$, but on $\M_h$, we will introduce a different norm. 
On $V_h\times \M_h$,  we define the bilinear form
 \begin{equation}\label{dVFad}
b_d(v_h, u_h)=  d\, a_0(u_h, v_h)+(u'_h,v_h)\  \text{for all} \ u_h\in \M_h, v_h \in V_h,
\end{equation}
where $d=d_{\varepsilon,h}$ is a constant, that might depend on $h$ and $\varepsilon$ and is motivated by the 
{\it upwinding Petrov-Galerkin method} introduced in Section \ref{sec:GenBubble}. 
 
 The discrete optimal trial norm   on $ \M_h$ is defined by 
 \begin{equation}\label{eq:dotn}
  \|u_h\|_{*,h}:= \du {\sup} {v_h \in V_h}{} \ \frac {b_d(v_h, u_h)}{|v_h|}.
 \end{equation}
  An explicit representation of this norm is established in \cite{CRD-results}. We have

\begin{equation}\label{eq:COptNorm-h} 
\|u_h\|_{*,h}^2  = d^2|u_h|^2 +|u_h|^2_{*,h}, 
\end{equation}
where, 
 \begin{equation}\label{eq:PhTu-Norm} 
|u|^2_{*,h}= \frac{1}{n}\sum_{i=1}^n \left(\frac{1}{h}\, \int_{x_{i-1}}^{x_i} u(x)\,dx\right)^2 - \left(\int_0^1u(x)\,dx\right)^2.
\end{equation}
We note that $|\cdot |_{*,h}$ is a  semi-norm on $V_h$, since we can have $|u_h|_{*,h}= 0$ for  any non zero function $u_h \in \M_h$ such that 
\[
  \int_{x_{i-1}}^{x_i} u_h (x)\,dx = \frac{1}{n} \, \int_0^1u_h(x)\,dx, \ i=1,2,\cdots,n.
\]
In particular, for $n=2m$, such a function is $u_h =\varphi_1 + \varphi_3+\cdots + \varphi_{2m-1}$.
Using  the Cauchy-Schwarz inequality, we  can also check that $|u|_{*,h} \leq \|u\|$. Indeed, 
\begin{equation}\label{weekN}
|u|^2_{*,h} \leq  \frac{1}{n}\sum_{i=1}^n \left(\frac{1}{h} \int_{x_{i-1}}^{x_i} u(x)\,dx\right)^2 \leq 
\sum_{i=1}^n \int_{x_{i-1}}^{x_i} u^2(x)\,dx=\|u\|^2. 
\end{equation}
The above estimates together with the formula \eqref{eq:COptNorm-h} suggest that the  discrete optimal trial norm  $\|\cdot\|_{*,h}$ could be a weak norm on $\M_h$ if $d$ is very small. 

The optimal test norm, helps with stability estimates in the following sense. If we consider the problem: 
Find $u_h \in \M_h$ such that
 \begin{equation}\label{dVFd}
b_d(v_h, u_h) = F_h(v_h), \ \text{for all} \ v_h \in V_h=\M_h,
\end{equation}
where $F_h: V_h \to \R$ is  a linear functional on $V_h$, then according to the definition \eqref{eq:dotn}, we have
\begin{equation}\label{eq:OptD}
  \|u_h\|_{*,h}:= \du {\sup} {v_h \in V_h}{} \ \frac {b_d(v_h, u_h)}{|v_h|} = \du {\sup} {v_h \in V_h}{} \ \frac {F_h(v_h)}{|v_h|} :=\|F_h\|_{V_h^*}.
 \end{equation}
 \begin{remark} \label{rem:u2-u1}
 In particular, if $F_1$ and $F_2$ are two linear functionals on $V_h$ and we solve for  $u_h^1, u_h^2 \in \M_h$ such that
 \begin{equation}\label{dVF12}
 \begin{aligned}
b_d(v_h, u_h^1)  & = F_1(v_h), \ \text{for all} \ v_h \in V_h=\M_h,  \text{and} \\
 b_d(v_h, u_h^2) & = F_2(v_h),  \ \text{for all} \ v_h \in V_h=\M_h, 
\end{aligned} 
\end{equation}
then 
 \begin{equation}\label{eq:OptEr}
  \|u_h^2-u_h^1\|_{*,h}= \|F_2 -F_1\|_{V_h^*}.
 \end{equation}
 As an application, we can have $u_h^1$  be the FD approximation and $u_h^2$ be the FE approximation  of  problem \eqref{eq:1d-model}. In this case, we can estimate the difference between the two solutions in the  $\|\cdot \|_{*,h}$ norm by finding an upper bound for $ \|F_2 -F_1\|_{V_h^*}$.
 \end{remark} 
 
\section{The Petrov-Galerkin method  with bubble type test space } \label{sec:PG}
For improving the stability and approximability of the standard linear finite element approximation for solving \eqref{VF1d}, various  Petrov-Galerkin discretizations  were considered, see e.g., \cite{CRD-results, zienkiewicz76, mitchell-griffiths, roos-stynes-tobiska-96, zienkiewicz2014}. In this section, we introduce a general class of upwinding PG discretizations based on a bubble modification of the  standard $C^0-P^1$ test space.  
The idea is to define  $V_h$  by adding to each $\varphi_j$, a pair of polynomial bubble functions.
According to Section 2.2.2 in \cite{zienkiewicz2014}, this idea  was first suggested in \cite{ZGHupwind76} and used in the same year in \cite{zienkiewicz76} with  quadratic bubble modification.  
The method  is known in literature as {\it upwinding PG method}, according to Section 2.2.2 in \cite{zienkiewicz2014}, or {\it upwinding finite element method}, according to Section 2.2 in \cite{roos-stynes-tobiska-96}.

Besides a more general approach of the method, we discover an equivalent variational  reformulation of the proposed PG  discretization that uses  a new bilinear form defined on {\it standard  linear finite element spaces}. The   new formulation  leads to strong connections with the upwinding FD methods and to a better understanding of both the FD and  the FE  methods. 

The standard  variational formulation for solving  \eqref{eq:1d-model} with $\kappa=1$, is: Find $u \in Q=H^{1}_0(0,1)$ such that 
\begin{equation}\label{PG4model}
b(v, u) =\varepsilon\, a_0(u, v)+(u',v) = (f,v ) \ \Forall  v \in V=H^{1}_0(0,1).\\
\end{equation}
A general  Petrov-Galerkin method for solving \eqref{PG4model} chooses  a test space $V_h \subset V=H^{1}_0(0,1)$ that is different from the trial space $\M_h \subset Q=H^1_0(0,1)$. 

\subsection{General Bubble Upwinding Petrov-Galerkin Method}\label{sec:GenBubble} 

On $[0,h]$, consider a continuous  bubble generating function $B:[0,h] \to\R$ with the following properties:
\begin{equation}\label{Bbounds}
B(0)=B(h)=0,\\
\end{equation}

\begin{equation}\label{b1}
\int_0^h B(x)  \, dx=b_1h \ \text{with} \ b_1>0.\\
\end{equation}

 By translating $B$, we generate $n$ bubble functions that are locally supported. For $ i=1,2, \cdots, n$,  we define $B_i:[0, 1] \to \R$ by $B_i(x)=B(x-x_{i-1})=B(x-(i-1)h)$ on $[x_{i-1}, x_i]$, and we extend it  by zero to the entire interval $[0, 1]$. Note that $B_1=B$ on $[0, h]$, and for $ i=1,2, \cdots, n$, we have
\begin{equation}\label{B_i_bounds}
B_i(x_{i-1})=B_i(x_i)=0, \  \text{and} \ B_i=0\ \text{on} \ [0, 1]\backslash (x_{i-1}, x_i).
\end{equation}
In addition,
\begin{equation}\label{B_i_b1}
\int_{x_{i-1}}^{x_i} B_i(x)  \, dx=b_1h \ \text{with} \ b_1>0.\\
\end{equation}

Next, we consider a particular class of Petrov-Galerkin discretizations of the model problem \eqref{PG4model} with trial space $\M_h= span\{ \varphi_j\}_{j = 1}^{n-1}$ and the test space  $V_h$ obtained by modifying $M_h$  such that diffusion is created from the convection term. 

The FE {\it bubble upwinding}  idea is based on building $V_h$  by translating a general function $B$ satisfying \eqref{Bbounds} and \eqref{b1}. 
To be more precise, we define the test space $ V_h$ by 
 \[
 V_h:= span \{ \varphi_j  + (B_{j}-B_{j+1})\}_{j = 1}^{n-1}, 
 \]
 where $\{ B_i\}_{i=1,\cdots,n}$ are defined above  and satisfy  \eqref{B_i_bounds} and \eqref{B_i_b1}. 
We note that both $\M_h$ and $V_h$ have the same dimension of $(n-1)$. 


The upwinding Petrov Galerkin discretization with general bubble functions for 
\eqref{eq:1d-model} is: Find $u_h \in \M_h$ such that 
\begin{equation}\label{eq:1d-modelPG}
b(v_h, u_h) = \varepsilon\, a_0(u_h, v_h)+(u'_h,v_h) =(f,v_h) \ \Forall  v_h \in V_h. 
\end{equation}
Next, we show that the variatonal formulation \eqref{eq:1d-modelPG} admits a reformulation that uses  a new bilinear form defined on {\it standard  linear finite element spaces}. We look for 
\[
u_h= \sum_{j=1}^{n-1} \alpha_j \varphi_j,
\]
and consider a generic test function 
\[
v_h= \sum_{i=1}^{n-1} \beta_i \varphi_i + \sum_{i=1}^{n-1}  \beta_i (B_i - B_{i+1}) = \sum_{i=1}^{n-1} \beta_i \varphi_i + \sum_{i=1}^{n}  (\beta_i - \beta_{i-1}) B_{i},
\]
where, we define $\beta_0=\beta_n=0$. By introducing the notation 
\[
B_h:=\sum_{i=1}^{n}  (\beta_i - \beta_{i-1}) B_{i},  \ \text{and } \  w_h:=  \sum_{i=1}^{n-1} \beta_i \varphi_i ,
\]
we get 
\[
v_h=w_h + B_h.
\]

By using the formulas \eqref{B_i_bounds}, \eqref{B_i_b1}, and the facts that $u'_h, w'_h $  are constant  on each of  the intervals $[x_{i-1}, x_i]$, and that $w'_h= \frac{\beta_i -\beta_{i-1} }{h}$ on $[x_{i-1}, x_i]$,  we obtain
\[
(u'_h, B_h) = \sum_{i=1}^{n}\int_{x_{i-1}}^{x_i}  u'_h (\beta_i - \beta_{i-1}) B_{i}=
 \sum_{i=1}^{n} u'_h \,  w'_h \int_{x_{i-1}}^{x_i}  B_{i} =b_1h \sum_{i=1}^{n} \int_{x_{i-1}}^{x_i}  u'_h  w'_h. 
\]
Thus
\begin{equation} \label{eq:upBh}
(u'_h, B_h) =b_1h (u'_h, w'_h), \ \text{where} \  v_h=w_h + B_h.
\end{equation}
In addition, since $u'_h$ is constant on $[x_{i-1},x_i]$, we have
\[
(u'_h, B'_i) =u'_h\int_{x_{i-1}}^{x_i} B'_i(x)  \, dx=\ 0 \ \text{for all} \ i=1, 2, \cdots,  n. 
\]
Hence, 
\begin{equation} \label{eq:upBph}
 (u'_h, B'_h) =0, \  \text{for all} \  u_h \in \M_h, v_h=w_h + B_h \in V_h.
\end{equation}
From  \eqref{eq:upBh} and  \eqref{eq:upBph}, for any $u_h \in \M_h$ and  $v_h=w_h + B_h \in V_h$, we get
\begin{equation} \label{eq:bPG}
 b(v_h, u_h) = \left (\varepsilon + b_1h\right )  (u'_h, w'_h) +  (u'_h, w_h).
\end{equation}
 The addition of the bubble part to the test space leads to the extra diffusion term $b_1h  (u'_h, w'_h)$ with  $b_1h >0$  matching the sign of the coefficient of $u'$ in  \eqref{eq:1d-model}. This justifies the terminology  of 
 {\it upwinding PG} method. 
 
 Here are two  important notes regarding the variational formulation \eqref{eq:bPG}. First, note that only the linear part $w_h$ of $v_h$ appears in the expression of $ b(v_h, u_h)$ of \eqref{eq:bPG}.  Second, note that  the  functional  $v_h \to (f, v_h)$ can  be also viewed as a functional only of the linear part $w_h$. Indeed, using the splitting $v_h=w_h + B_h $  with $\displaystyle B_h:=\sum_{i=1}^{n}  (\beta_i - \beta_{i-1}) B_{i}$, we have
\[
(f, v_h) =(f, w_h) + (f, \sum_{i=1}^{n}  h w'_h B_i) =(f, w_h) +h\,  (f, w'_h  \sum_{i=1}^{n}   B_i). 
\]
As a consequence, the variational formulation of the upwinding  Petrov-Galerkin method can be reformulated as:   Find $u_h \in \M_h$ such that 
\begin{equation}\label{eq:1d-modelPGR}
 \left (\varepsilon + b_1h\right )  (u'_h, w'_h) +  (u'_h, w_h) = (f, w_h )  + h\,  (f, w'_h  \sum_{i=1}^{n}   B_i),  w_h \in M_h. 
\end{equation}
\begin{remark}\label{rem:B2L}

 The reformulation \eqref{eq:1d-modelPGR} of the upwinding PG discretization \eqref{eq:1d-modelPG} involves the same piecewise linear test and trial space.
The coefficient of $(u'_h, w'_h)$, (that we call diffusion coefficient) in \eqref{eq:1d-modelPGR} is  $d=d_{\varepsilon,h} = \varepsilon +h\, b_1$.
Thus, the  left hand side of \eqref{eq:1d-modelPGR}  is given by the bilinear form  $b_d(u_h,w_h)$ defined only for continuous piecewise linear functions.  
Consequently, for the given test space $V_h$,  the optimal test norm  on $\M_h$ is 
 \begin{equation}\label{eq:COptNorm-hB} 
\|u_h\|_{*,h}^2  =( \varepsilon +h\, b_1)^2\, |u_h|^2 + |u_h|^2_{*,h}, 
\end{equation}
where  $|u_h|^2_{*,h}$  is defined in \eqref{eq:PhTu-Norm}, see Section \ref{sec:optimalnorm-linears}. 
 \end{remark}
 
 
The reformulation \eqref{eq:1d-modelPGR} leads to the  linear system 
\begin{equation}\label{1d-PG-ls}
\left ( \left (\frac{\varepsilon}{h} + b_1 \right ) S+ C \right )\, U = F_{PG}, 
\end{equation}
where \(U,F_{PG}\in\R^{n-1}\)  with:
 \[
 U:=\begin{bmatrix}u_1\\u_2\\\vdots\\u_{n-1}\end{bmatrix},\quad F_{PG}:= \begin{bmatrix}(f,\varphi_1)\\ (f,\varphi_2)\\\vdots \\ (f,\varphi_{n-1})\end{bmatrix} + 
 \begin{bmatrix} (f, B_1 -B_2) 
 \\  (f, B_2-B_3)\\ \vdots \\  (f, B_{n-1} -B_n) \end{bmatrix}, 
\]
and $S, C$ are the matrices defined in \eqref{eq:SC}, for the FD discretiztion.
\begin{remark}\label{rem:Phi-B}
 Here, we note that by using  the notation
\[
 \ \ d_{\varepsilon,h} = \varepsilon +h\, b_1, \ \text{or} \  \ \frac{\varepsilon}{h} + b_1 = \frac{d_{\varepsilon,h} }{h},  
\]
the matrix of the finite element  system  \eqref{1d-PG-ls} is 
\begin{equation} \label{eq:M4CDfe} 
M_{fe}= tridiag\left ( -\frac{d_{\varepsilon,h}}{h} - \frac{1}{2},\  \frac{2\, d_{\varepsilon,h}}{h},\  -\frac{d_{\varepsilon,h}}{h} + \frac{1}{2} \right ).
\end{equation}

\end{remark}
\subsection{Comparing the upwinding  FD and the  PG FE methods } \label{sec:System-comparison} 
In this section, we compare the general upwinding Petrov-Galerkin finite element discretization \eqref{eq:1d-modelPG} with the general upwinding finite difference method \eqref{FDUWgeneral} with $\varepsilon_{h}= \varepsilon (1+\Phi(\P e)) $ as defined in Section \ref{sec:FD}.  We consider that the bubble $B$ for the PG method is chosen such that $\varepsilon_{h}= \varepsilon + h\, b_1$, i.e., \\
 \[
 \Phi(\P e)=\Phi \left(\frac{h}{2\varepsilon}\right) =2\, b_1\, \P e,
 \]
where $b_1$ is defined by \eqref{b1}. 
  In this case, $\varepsilon_{h}= d_{\varepsilon,h}$, and  a direct  consequence of the matrix formulas  \eqref{eq:M4CDfd} and  \eqref{eq:M4CDfe} is that the FD matrix of the system \eqref{1d-LSfd} coincides with the FE matrix of the system \eqref{1d-PG-ls}, i.e., 
 \begin{equation}\label{Mfd=Mfe}
 M_{fd} =M_{fe}. 
 \end{equation}
 
  Since $b_1$ is allowed to depend on $\varepsilon$ and  $h$,   the  function   $\Phi(\P e) =2\, b_1\, \P e$ can recover  the  functions $\Phi$ used for the classical upwinding finite difference methods. 
 Furthermore, the {\it artificial diffusion}, $\varepsilon \Phi(\P e) $ that is introduced for upwinding  FD discretization, becomes exactly the integral of a bubble function that defines the corresponding  upwinding PG method, i.e., 
  \[
 \varepsilon \Phi(\P e)  = h\, b_1=\int_0^h B.
 \]
  
Next, we show that the FD system \eqref{1d-LSfd} and the FE system \eqref{1d-PG-ls} correspond to variational formulations that use the same bilinear form defined on $\M_h \times \M_h$ and compare the 
right hand side functionals. 

For a continuous function $\theta : [0,1]\to \R$ such that $\theta(0)=\theta(1)=0$, the composite trapezoid rule (CTR) on the uniformly distributed  nodes $x_0,x_1, \ldots, x_n$ is 
\begin{equation}\label{CTR}
\int_0^1\theta(x)dx \approx T_n(\theta):=h\, \sum_{i=1}^{n-1} \theta(x_i) .
\end{equation}

If the vector $u^{FD}=[u_1, \ldots, u_{n-1}]^T$ is the solution of the finite difference system \eqref{FDUWgeneral}, then we define the corresponding proxy function  $u_h^{FD} \in \M_h$ by
\[
u_h^{FD}:=\displaystyle \sum_{i=1}^{n-1} u_i\varphi_i. 
\] 
From Section \ref{sec:FD},  we have  that $u^{FD}$ is the solution of the system 
\begin{equation}\label{sys:Mfd}
M_{fd}\, u^{FD}=h\, F,  \ \text{where}  \ F=[f(x_1), \cdots, f(x_{n-1})]^T. 
\end{equation}
Elementary calculations show that  
\begin{equation}\label{RHSfe-fd}
h\, f(x_j)=T_n(f\varphi_j)=T_n(f(\varphi_j + B_j-B_{j+1})), \ \text{for} \  j=1,2,\cdots, n-1.
\end{equation}
The following remark illustrates another connection between the upwinding FD and the bubble PG FE method.
\begin{remark}\label{FDis FE} 
In light of \eqref{Mfd=Mfe} and \eqref{RHSfe-fd}, we note that 
 the upwinding FD system \eqref{FDUWgeneral} can be obtained from  the  PG discretization \eqref{eq:1d-modelPG} reformulated as \eqref{eq:1d-modelPGR}  and leading to the system \eqref{1d-PG-ls}, where the  entries of the RHS vector, $\int_0^1 f(\varphi_j + B_j-B_{j+1})$ are approximated  with  the  CTR approximations. 
 
 At the local level, this corresponds to  using  the standard trapezoid rule  to approximate each of the integrals
\begin{equation}\label{eq:3int}
\int_{x_{j-1}}^{x_j} f\, \varphi_j , \ \ \int_{x_{j}}^{x_{j+1}} f\, \varphi_j ,\  \text{and}\  \int_{x_{j-1}}^{x_j}  f\, B_j. 
\end{equation}
\end{remark}

Next, we will justify that, under the assumption $\Phi(\P e) =2\, b_1\, \P e$, the upwinding FD method  \eqref{FDUWgeneral}  and  the  PG discretization \eqref{eq:1d-modelPG} can be viewed as variational formulations 
using the same bilinear form defined on $\M_h\times \M_h$. First, for the FD formulation, using \eqref{RHSfe-fd},  the algebra of Section \ref{sec:1d-lin-discrete}, with $\varepsilon \to \varepsilon_h$, and the notation introduced in Section  \ref{sec:optimalnorm-linears}, we have that the system \eqref{sys:Mfd}  of the upwinding FD method  \eqref{FDUWgeneral} corresponds to the variational formulation 
\[
b_d(u_h^{FD}, \varphi_j)=T_n(f\, \varphi_j) = T_n(f(\varphi_j + B_j-B_{j+1})), \ \ \text{for all} \ j=1,2,\ldots, n-1,
\]
with  $d= \varepsilon_h =  d_{\varepsilon,h}= \varepsilon + h\, b_1$.  
Based on the linearity of $b_d(u_h^{FD},\cdot)$ and $T_n(f,\cdot)$, we can  further conclude that  
\begin{equation}\label{uhFD}
b_d(u_h^{FD},w_h)=T_n(f\, w_h),  \ \text{for all} \ w_h \in \M_h.
\end{equation}

Second, for the FE formulation, if $u_h=u_h^{FE}$ is the solution of \eqref{eq:1d-modelPG}, using the equivalent  variational formulation \eqref{eq:1d-modelPGR} with  $d= d_{\varepsilon,h}= \varepsilon + h\, b_1$, we have 
\begin{equation}\label{uhFE}
b_d(u_h^{FE}, w_h)=(f, w_h )  + h\,  (f, w'_h  \sum_{i=1}^{n}   B_i),   \ \text{for all} \ w_h\in \M_h.
\end{equation}

Based on the reformulations \eqref{uhFD} and \eqref{uhFE}, we can estimate now the difference between the upwinding FD and the  bubble PG solutions in the optimal trial norm. 
If we define  the linear functionals $F_1, F_2:\M_h \to \R$ by 
\[
\begin{aligned}
F_2(w_h) & =(f\, w_h) + h\,  (f, w'_h  \sum_{i=1}^{n}   B_i), \\
F_1(w_h) & = T_n \left ( f\, w_h + h\,  f \, w'_h  \sum_{i=1}^{n}   B_i \right )=T_n(f\, w_h) , 
\end{aligned}
\]
then the functionals  $F_h, W_h:\M_h \to \R$ defined by 
\[
 F_h(w_h) =\int_0^1f\, w_h \ dx -T_n(f\, w_h), \ \text{and} \ 
   W_h(w_h)  =  h\,  (f, w'_h  \sum_{i=1}^{n}   B_i)
\]
are also  linear functionals and
\[
F_2(w_h) - F_1(w_h)= F_h(w_h) + W_h(w_h).
\]
  Using Remark \ref{rem:u2-u1}, and the norm  $\|\cdot \|_{*,h}$ described in \eqref{eq:COptNorm-hB}, we obtain 

\begin{equation}\label{Fh}
\begin{aligned}
\|u_h^{FE}-u_h^{FD}\|_{*,h} & =\|F_2 -F_1\|_{\M_h^*}=\|F_h+W_h\|_{\M_h^*} \\
& \leq \|F_h\|_{\M_h^*}  + \|W_h\|_{\M_h^*}. 
\end{aligned}
\end{equation}
 It was proved in  \cite{CB2}, by using standard approximation properties of Trapezoid Rule, that if $f\in\C^2([0,1])$, then 
 \begin{equation}\label{eq:NormFh}
 \|F_h\|_{\M_h^*} \leq h^2\left( \frac{\|f''\|_{\infty}}{12}+\frac{\|f'\|_{\infty}}{6}\right).
\end{equation}
 On the other hand, using the Cauchy-Schwartz inequality and assuming that 
 $
 \|B\|_\infty \leq M 
 $, 
 it is easy to check that 
 \[
 \|W_h\|_{\M_h^*} \leq M\, h \|f\|_{L^2}.
 \]
Consequently, we obtain 
\begin{equation}\label{udiff}
\|u_h^{FE}-u_h^{FD}\|_{*,h} \leq h^2\left( \frac{\|f''\|_{\infty}}{12}+\frac{\|f'\|_{\infty}}{6}\right) + M\, h \|f\|_{L^2}.
\end{equation}
The next remark provides a way to improve a standard upwinding FD method by using its connections with a bubble PG method. 
\begin{remark} \label{rem:FE-FDnorm} 
The estimate \eqref{udiff} is suboptimal in the sense that the second term in the RHS of \eqref{udiff} is only $O(h)$. In light of  Remark \ref{FDis FE}, this can be improved  by using  a modified  upwinding  FD method as follows. We start with the  corresponding PG  discretization  \eqref{eq:1d-modelPG}, reformulated as \eqref{eq:1d-modelPGR}, where  the bubble $B$ is chosen such that $\varepsilon_{h}= \varepsilon + h\, b_1$ ( or $\Phi(\P e)=2\, b_1\P e$), leading to the system \eqref{1d-PG-ls}. Then  the RHS entries of \eqref{1d-PG-ls}, i.e.,   $\int_0^1 f(\varphi_j + B_j-B_{j+1})$, are  approximated  with  better (than trapezoid) quadratures.  
This suggests in fact to use a higher order quadrature, e.g., Cavalieri-Simpson or Gaussian quadrature, to approximate each of the integrals in \eqref{eq:3int}. In this way, new upwinding FD methods can be obtained by better approximating the dual vector of upwinding PG methods. Besides an improvement of the order of the estimate \eqref{udiff},  numerical computations  show that the  bubble PG approximation is better than related upwinding FD methods even in the discrete infinity error.  
\end{remark}
Consequently, the upwinding FD method  \eqref{FDUWgeneral}  for solving \eqref{eq:1d-model} can be improved just be modifying the  RHS vector of the FD system \eqref{1d-LSfd}. The new  $j-$th  entry of the RHS system  \eqref{1d-LSfd} is obtained  by approximating  the integral  $(f,\varphi_j +B_j-B_{j+1}) $  using, locally, higher order quadratures. Since we can choose different bubble functions $B$ and different quadratures to locally approximate   
the dual vectors, the improvement process is not unique. 

On the other hand, any  bubble PG method uses a fixed quadrature to approximate the dual vector. Thus, numerically, this performs  identically with an upwinding FD method with a special RHS induced by the quadrature. 

\section{Upwinding PG with quadratic bubble functions} \label{sec:Quad-Bubbles} 

In this section, we consider the model problem \eqref{VF1d}  with the discrete space 
 $\M_h= span\{ \varphi_j\}_{j = 1}^{n-1}$ and $V_h$ a modification of $\M_h$  using {\it quadratic bubble functions}.
 The resulting method can be found in e.g.,  \cite{CRD-results,  zienkiewicz76, mitchell-griffiths, ZGHupwind76, zienkiewicz2014}. However, based on the results of the previous section, we relate the {\it quadratic bubble PG} method with the general upwinding FD method and present  ways to improve the performance of upwinding FD method.
 
   First, for a parameter $\beta>0$,  we define the bubble function $B$ on $[0, h]$ by
 \[
 B(x)= \frac{4\, \beta}{h^2} x(h-x).
 \]
 Elementary calculations show that \eqref{b1} 
 holds with $b_1=\frac{2\, \beta}{3}$. 
 Using the function $B$ and the general construction of Section \ref{sec:GenBubble}, we define the set of bubble functions $\{B_1, B_2, \cdots,B_n\}$  on $[0, 1]$ and 
 \[
 V_h:= span \{ \varphi_j + (B_{j}-B_{j+1}) \}_{j= 1}^{n-1}.
 \]
  In this case, we have $d_{\varepsilon,h} =\varepsilon + \frac{2\beta}{3} h$. According to \eqref{eq:M4CDfe},  we obtain 
 \[
M_{fe}  =  tridiag\left ( -\frac{\varepsilon}{h} - \frac{2\beta}{3} - \frac{1}{2},\  \frac{2\, \varepsilon}{h}+ \frac{4\beta}{3} ,\   -\frac{\varepsilon}{h} - \frac{2\beta}{3} + \frac{1}{2} \right ). 
\]
For $\varepsilon_h=d_{\varepsilon,h}= \varepsilon + \frac{2\beta}{3} h$, or $\Phi(\P e)=2\, b_1\P e =\frac{4\beta}{3} \P e$, as presented at the beginning of Section \ref{sec:System-comparison}, the matrix of the system \eqref{1d-LSfd} is $M_{fd}=M_{fe}$.  

Here, we note that  we can relate any upwinding FD method defined by an admissible function $\Phi(\cdot)$ to  
our quadratic bubble PG method introduced in this section. This is justified by the fact that we can choose $\beta$, hence $b_1=\frac{2\, \beta}{3}$, such that   $\varepsilon_h=\varepsilon (1+  \Phi(\P e))=\varepsilon + b_1\, h$, i.e.,  
\[
\beta= \frac{3}{4} \frac{\Phi(\P e)}{\P e}. 
\]
Now, we can  benefit from  Remark \ref{rem:FE-FDnorm}.  By using, for example, the Cavalieri-Simpson (CS) rule, we can improve  the upwinding FD method  \eqref{FDUWgeneral}  for solving \eqref{eq:1d-model}  with $\varepsilon_h= \varepsilon + \frac{2\beta}{3} h$ by modifying  only the  RHS vector of  the FD system \eqref{1d-LSfd}. The new  $j-$th  entry of the RHS system  \eqref{1d-LSfd} is obtained  by approximating  
$(f,\varphi_j +B_j-B_{j+1})$ using the CS rule
 \[
 \int_a^b g(x) \, dx \approx \frac{b-a}{6} \left ( g(a) + 4 g\left (\frac{a+b}{2}\right ) + g(b) \right ), 
 \]
 on each mesh interval.
 This leads to replacing $h f(x_j) $ in  \eqref{1d-LSfd} by the value 
 \[
\frac{h}{3}  \left [  (1+2\, \beta) f(x_j-h/2) + f(x_j) + 
  (1-2\, \beta) f(x_j+h/2) \right ].
 \]
 As a specific application, we consider the case $\beta=3/4$ that leads to  $d_{\varepsilon,h}=\varepsilon_h =\varepsilon +h/2$. In this case, the FE matrix of the system \eqref{1d-PG-ls} coincides with  the matrix of the standard upwinding  FD  system \eqref{FDUW1}, and we have 
  \[
M_{fe} = M_{fd} =  tridiag\left ( -\frac{\varepsilon}{h} - 1,\  \frac{2\, \varepsilon}{h}+1,\   -\frac{\varepsilon}{h} \right ).
\]
To improve the performance of the upwinding  FD  method \eqref{FDUW1}, we can consider the   {\it  CS quadratic upwinding  method}  that  solves  the system
 \[
 M_{fd}\, U = G, \ \text{where}
 \] 
\[
G_j= \frac{h}{3}  \left (\frac{5}{2}  f(x_j-h/2) + f(x_j) -  \frac{1}{2}  f(x_j+h/2) \right),\  j=1,2,\cdots,n-1.
\]

Numerical results show that the {\it  Cavalieri-Simpson FD  (CS-FD)  method} performs better than the standard upwinding  FD method even if we measure the  error in the discrete infinity norm. We note that the standard upwinding FD method is in fact  the Trapezoid Finite Difference (T-FD) as a result of the quadratic bubble PG method using theTrapezoid rule for estimating the dual vector. 

We solved \eqref{eq:1d-model} with $\kappa=1$ and $f(x) =2x$, which allows to find the exact solution and to compute the discrete infinity error approximation. For example, for  $\epsilon =10^{-6}$, the standard upwinding FD method, or T-FD  produces a discrete  infinity error of order $O(h)$, while the 
CS-FD method exhibits higher order using the same  discrete infinity error. For example, when $n=800$, the 
discrete infinity error for T-FD is $0.0012$, and for CS-FD, the error is $0.64 \times 10^{-6}$. For the two methods, the error behaviour and error order, in various norms, will be addressed in future work.  

\section{Upwinding PG with exponential bubble functions} \label{sec:Exponential-Bubbles} 

We consider the model problem \eqref{VF1d}  with the discrete space \\ 
 $\M_h= span\{ \varphi_j\}_{j = 1}^{n-1}$ and $V_h$ a modification of $\M_h$ by  using an {\it exponential  bubble function}. We define the bubble function $B$ on $[0, h]$ as the solution of 
 \begin{equation}\label{eq:expB}
 -\varepsilon B'' -B' =1/h, \ B(0)=B(h)=0.
 \end{equation}
  Using the function $B$ and the general construction of Section \ref{sec:GenBubble}, we define the set of bubble functions $\{B_1, B_2, \cdots,B_n\}$  on $[0, 1]$ and 
 \begin{equation}\label{eq:VhE}
 V_h:= span \{ \varphi_j + (B_{j}-B_{j+1}) \}_{j= 1}^{n-1}=span \{ g_j \}_{j= 1}^{n-1},
  \end{equation}
where $ g_j:=\varphi_j + (B_{j}-B_{j+1})$, $j=1,2,\cdots,n-1$.

In order to address efficient computations of coefficients and the finite element matrix of the {\it exponential bubble PG method}, we introduce the following notation
 \begin{equation}\label{eq:g0}
 g_0:=\tanh(\P e)= \frac{ e^{\frac{h}{2\varepsilon}} -e^{-\frac{h}{2\varepsilon}}} {e^{\frac{h}{2\varepsilon}} +e^{-\frac{h}{2\varepsilon}}}= \frac  {1- e^{-\frac{h}{\varepsilon}} }{1+e^{-\frac{h}{\varepsilon}} },
  \end{equation}
 \begin{equation}\label{eq:ld}
 l_d:= \frac{1+g_0}{2}= \frac  {e^{\frac{h}{\varepsilon}}  -1}{e^{\frac{h}{\varepsilon}} +e^{-\frac{h}{\varepsilon}}}, \ \text{and} \ l_0:=\frac{1+g_0}{2 g_0} = \frac{l_d}{g_0}, 
  \end{equation}
   \begin{equation}\label{eq:ud}
 u_d:= \frac{1-g_0}{2}= \frac  {1- e^{-\frac{h}{\varepsilon}} }{e^{\frac{h}{\varepsilon}} +e^{-\frac{h}{\varepsilon}}}, \ \text{and} \ u_0:=\frac{1-g_0}{2 g_0} = \frac{u_d}{g_0}. 
  \end{equation}
It is easy to check that the unique solution of \eqref{eq:expB} is 
 \begin{equation}\label{expB}
 B(x)=l_0 \left (1 - e^{-\frac{x}{\varepsilon}} \right )- \frac{x}{h}, \ x \in [0, h],  \ \text{and} 
  \end{equation}
   \begin{equation}\label{IntexpB}
   \int_0^{h} B(x)\, dx = \frac{h}{2 g_0} - \varepsilon.
     \end{equation}
 
Consequently,  we have that \eqref{b1} holds with $b_1= \frac{1}{2 g_0} - \frac{\varepsilon}{h}$, and we obtain that   $\varepsilon +b_1\, h= \frac{h}{2 g_0}$. Using Remark \ref{rem:B2L},  the optimal test norm  on $\M_h$ is \begin{equation}\label{eq:COptNorm-hBe} 
\|u_h\|_{*,h}^2  =\left (\frac{h}{2 g_0} \right )^2\, |u_h|^2 + |u_h|^2_{*,h}
\end{equation}
where  $|u_h|^2_{*,h}$  is defined in \eqref{eq:PhTu-Norm}. 
  
  Using Remark \ref{rem:Phi-B}, we obtain that $ \frac{\varepsilon_{b,h}}{h} = \frac{1}{2 g_0}$, and the matrix for the PG finite element discretization with exponential bubble test space becomes
  \begin{equation} \label{eq:M4CDfeEB}
  \begin{aligned}
M_{fe}^e &= tridiag\left ( -\frac{1+g_0}{2g_0} ,\  \frac{1}{g_0},\   -\frac{1-g_0}{2g_0} \right ) \\ 
 &  =  tridiag \left (  -l_0,  {1}/{g_0}, -u_0 \right )= \frac{1}{g_0}\,   tridiag \left (  -l_d, 1, -u_d \right ).
 \end{aligned}
\end{equation}
For $\varepsilon_h=d_{\varepsilon,h}= \varepsilon + b_1h = \frac{h}{g_0} $, we have  $\Phi(\P e)=2\, b_1\P e =\P e \, \coth(\P e) -1$.  According to Section \ref{sec:System-comparison}, the matrix of the system \eqref{1d-LSfd}
is $M^e_{fd}=M^e_{fe}$. 

By applying Remark \ref{FDis FE}, using the trapezoid rule to approximate the dual vector for the PG method, we get the upwinding FD method known as the {\it Il'in-Allen-Southwell (IAS) method}, according to  \cite{roos-stynes-tobiska-96}, or to  the {\it Scharfetter-Gummel (SG) method}, according to \cite{quarteroni-sacco-saleri07}.

Using again Remark \ref{rem:FE-FDnorm}, and  the Cavalieri-Simpson rule, we can improve  the upwinding FD method  \eqref{FDUWgeneral}  for solving \eqref{eq:1d-model}  with $\varepsilon_h= \frac{h}{g_0} $ (which is the  IAS or the SG method), by modifying  only the  RHS vector of  the FD system \eqref{1d-LSfd}. The new  $j-$th  entry of the RHS system  \eqref{1d-LSfd} is obtained  by approximating     $(f,\varphi_j +B_j-B_{j+1}) $  using the Cavalieri-Simpson rule, on each mesh interval. Since the $B_j$ functions are generated by the exponential  bubble function $B$ given by \eqref{expB}, this leads to replacing $h f(x_j) $ in  \eqref{1d-LSfd}  for  $ j=1,2,\cdots,n-1$, with  
 \[
G_j:= \frac{h}{3}  \left [  (1+2\, B(h/2)) f(x_j-h/2) + f(x_j) + 
  (1-2\, B(h/2)) f(x_j+h/2) \right ].
 \]
 The  {\it  CS exponential upwinding  method}  reduces to solving  for  $U$ the system
 \[
 M^e_{fd}\, U = G. 
 \] 
 
As in the  polynomial bubble case, the  CS-FD method performs better than the standard IAS or SG method. 
It is important to mention here that the upwinding PG method based on the exponential bubble produces in fact the exact solution at the nodes, provided that the dual vector is computed exactly. 
Variants of this result seem to be known in various forms, see e.g., \cite{roos-stynes-tobiska-96, Roos-Schopf15}. We include a simple proof of the above statement that is based on the properties of the exponential bubble function $B$. 
In order to proceed with the proof, we will need to emphasize a few  properties of  the test functions $g_j$ as follows. For any $j=1,2, \cdots, n$, we have   
\begin{equation}\label{eq:gj}
 g_j =  \left\{
 \begin{array}{ccl}
    \displaystyle  B_j +   \varphi_j  & \mbox{if } \  x \in [x_{j-1}, x_j],\\ \\
    \displaystyle  - B_{j+1} +   \varphi_j  & \mbox{if } \  x \in [x_j, x_{j+1}],
\end{array} 
\right.\   \text{and} 
\end{equation}

\begin{equation} \label{eq:gjD}
 g_j^{'} =  \left\{
 \begin{array}{ccl}
    \displaystyle  B_j^{'}+ \frac{1}{h}  & \mbox{if } \  x \in (x_{j-1}, x_j),\\ \\
    \displaystyle  - B_{j+1}^{'} -   \frac{1}{h}  & \mbox{if } \  x \in (x_j, x_{j+1}).
\end{array} 
\right.\   
g_j^{''} =  \left\{
 \begin{array}{ccl}
     \displaystyle  B_j^{''}  & \mbox{if } \  x \in (x_{j-1}, x_j),\\ \\
    \displaystyle  - B_{j+1}^{''}  & \mbox{if } \  x \in (x_j, x_{j+1}).
\end{array} 
\right.
\end{equation}

Using \eqref{eq:gjD}  and the fact that on $[x_{j-1}, x_j]$ the functions $B_j$ satisfy the same differential  equations as $B$, see \eqref{eq:expB}, we obtain 
\begin{equation} \label{eq:gjEq}
-\varepsilon g_j'' -g_j' =0, \ \text{on} \ (x_{j-1}, x_j)\cup (x_j, x_{j+1}), 
\end{equation}
\begin{equation} \label{eq:gjJump}
g_j' (x_{j} -) - g_j' (x_{j} +) =B'(h) + B'(0) - \frac{2}{h} = \frac{1} {\varepsilon\, g_0}, 
\end{equation}

\begin{equation} \label{eq:gjm1}
g_j' (x_{j-1})=g_j' (x_{j-1} +) = B'(0) +  \frac{1}{h} = \frac{1} {\varepsilon} \, \frac{l_d} {g_0},  \ \text{and}
\end{equation}

\begin{equation} \label{eq:gjp1}
g_j' (x_{j+1})=g_j' (x_{j+1} -) = -B'(h) -  \frac{1}{h} = - \frac{1} {\varepsilon}\,  \frac{u_d} {g_0}.
\end{equation}
Next, we are ready to prove the following result.
\begin{theorem}\label{uFF} Let $\displaystyle u_h := \sum_{i=1}^{n-1} u_i \varphi_i$  be the finite element solution of  \eqref{eq:1d-modelPG}  with the test space as defined in \eqref{eq:VhE}. Then $u_h$ coincides with the linear interpolant $I_h(u)$ of  the exact solution $u$ of \eqref{eq:1d-model}, with   $\kappa=1$ on the nodes $x_0,x_1, \ldots, x_n$.   In other words, $u_j=u(x_j)$, $j=1,2,\cdots,n-1$.
\end{theorem}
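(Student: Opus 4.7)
The plan is to show that the piecewise linear interpolant $I_h(u)$ itself satisfies the discrete Petrov--Galerkin system, and then invoke uniqueness of the discrete solution to conclude $u_h = I_h(u)$. Since $I_h(u) \in \M_h$ and $u_h$ is uniquely determined by \eqref{eq:1d-modelPG}, it is enough to verify
\[
b(g_j, I_h(u)) = (f, g_j) \Forall j = 1, 2, \ldots, n-1,
\]
where $g_j = \varphi_j + (B_j - B_{j+1})$ is the test basis of $V_h$.

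First, since $u$ is the strong solution of \eqref{eq:1d-model} with $u(0) = u(1) = 0$ and $g_j \in H^1_0(0,1)$, integration by parts (on the whole interval, using $g_j(0) = g_j(1) = 0$) immediately gives
\[
b(g_j, u) = \varepsilon\, (u', g_j') + (u', g_j) = (-\varepsilon u'' + u', g_j) = (f, g_j).
\]
So it suffices to show the consistency identity $b(g_j, I_h(u)) = b(g_j, u)$, equivalently $b(g_j, e) = 0$ where $e := u - I_h(u)$.

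Next, I would exploit the crucial property \eqref{eq:gjEq}: on each of the open intervals $(x_{j-1}, x_j)$ and $(x_j, x_{j+1})$ the bubble-modified test function $g_j$ satisfies the \emph{adjoint equation} $\varepsilon g_j'' + g_j' = 0$. Note also $\text{supp}(g_j) \subset [x_{j-1}, x_{j+1}]$. Splitting the integrals defining $b(g_j,e)$ over these two subintervals and integrating by parts on each (separately, to avoid the jump of $g_j'$ at $x_j$), one has on $[x_{j-1}, x_j]$:
\[
\int_{x_{j-1}}^{x_j}\!\bigl(\varepsilon e' g_j' + e' g_j\bigr)\,dx = \bigl[\varepsilon e g_j' + e g_j\bigr]_{x_{j-1}}^{x_j} - \int_{x_{j-1}}^{x_j} e\bigl(\varepsilon g_j'' + g_j'\bigr)\,dx.
\]
The interior term vanishes by the adjoint equation, and the boundary term vanishes because $e(x_{j-1}) = e(x_j) = 0$ (the interpolant matches $u$ at all nodes). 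The same cancellation occurs on $[x_j, x_{j+1}]$. Adding, $b(g_j, e) = 0$, so $b(g_j, I_h(u)) = (f, g_j)$.

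The main subtlety, and the only place where care is required, is handling the jump of $g_j'$ at $x_j$: one must integrate by parts on each subinterval separately and rely on the fact that $e$ vanishes at every node so that no jump term at $x_j$ survives. Once this is clear, combining it with the adjoint-equation property \eqref{eq:gjEq} of the exponential bubble completes the proof, and conclusion $u_j = u(x_j)$ follows from the uniqueness of the solution to the non-singular PG system with matrix $M_{fe}^e$ given in \eqref{eq:M4CDfeEB}.
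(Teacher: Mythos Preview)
Your proof is correct and rests on the same two pillars as the paper's argument: the adjoint identity \eqref{eq:gjEq} for $g_j$ on each subinterval, and integration by parts on $[x_{j-1},x_j]$ and $[x_j,x_{j+1}]$ separately, followed by uniqueness of the discrete solution.

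The organization differs, and your route is in fact a bit more economical. The paper multiplies the differential equation by $g_j$, integrates by parts twice, and then \emph{computes} the surviving boundary contributions explicitly via \eqref{eq:gjJump}--\eqref{eq:gjp1} to show that the vector of exact nodal values $[u(x_1),\dots,u(x_{n-1})]^T$ satisfies the same linear system with matrix $M_{fe}^e$ as the PG solution. You instead work with the interpolation error $e=u-I_h(u)$ and observe that, because $e$ vanishes at every node, the boundary terms from the subinterval integrations by parts are killed outright; this lets you conclude $b(g_j,I_h(u))=(f,g_j)$ without ever writing down the jump values of $g_j'$. What the paper's approach buys is an explicit display of the system satisfied by the exact nodal values (equation \eqref{eq:IPde4}), which connects transparently to the matrix formula \eqref{eq:M4CDfeEB}; what your approach buys is a shorter argument that bypasses those computations entirely.
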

\begin{proof} For any fixed  $j \in \{1,2,\cdots,n-1\}$, we multiply the differential equation  \eqref{eq:1d-model}  (with $\kappa=1$) by $g_j$   and integrate by parts on the interval $[x_{j-1}, x_{j+1}]$ to obtain

\begin{equation} \label{eq:IPde}
\varepsilon \, \int_{x_{j-1}}^{x_{j+1}} u' g'_j -  \int_{x_{j-1}}^{x_{j+1}} u g'_j  + (u g_j) |_{x_{j-1}}^{x_{j+1}}=  \int_{x_{j-1}}^{x_{j+1}}\, f g_j=(f, g_j).
\end{equation}
Using that $g_j(x_{j-1})=g_j(x_{j+1})=0$, the third term in the LHS of  \eqref{eq:IPde} is zero.
Next, we apply integration  by parts for both integrals in the LHS of \eqref{eq:IPde}, spliting the integration on two 
subintervals $[x_{j-1}, x_j]$ and $[x_j, x_{j+1}]$, such that $g_j$ is smooth enough,  to obtain

\begin{equation} \label{eq:IPde2}
\begin{aligned}
& \int_{x_{j-1}}^{x_{j}}(-\varepsilon g_j^{''} -g'_j)\, u + \int_{x_{j}}^{x_{j+1}}(-\varepsilon g_j^{''} -g'_j)\, u \\
 & + \varepsilon\,  (u\,  g'_j) |_{x_{j-1}}^{x_j} + \varepsilon\, ( u\,  g'_j) |_{x_{j}}^{x_{j+1}} = (f, g_j).
 \end{aligned}
\end{equation}
By using  \eqref{eq:gjEq},  from \eqref{eq:IPde2} we get
\begin{equation} \label{eq:IPde3}
\begin{aligned}
& -\varepsilon g_j' (x_{j-1})\,  u(x_{j-1}) + \varepsilon [ g_j' (x_{j} -) - g_j' (x_{j} +))\,  u(x_{j}] \\
 &-\varepsilon g_j' (x_{j+1})\,   u(x_{j+1}) = (f, g_j).
 \end{aligned}
\end{equation}
Combining \eqref{eq:IPde3} with \eqref{eq:gjJump}-\eqref{eq:gjp1},  we obtain
\begin{equation} \label{eq:IPde4}
-\frac{l_d}{g_0} \,  u(x_{j-1}) + \frac{1}{g_0} \,  u(x_{j})  - \frac{u_d}{g_0} \,  u(x_{j+1})=(f, g_j),\ j=1,\cdots,n-1.
\end{equation}
Here, we notice that the matrix of the system \eqref{eq:IPde4}  with vector unknown $U_e=[u(x_1),\cdots u(x_{n-1})]^T$,
coincides with the matrix  $M_{fe}^e$ (see \eqref{eq:M4CDfeEB})  of the system solving for the finite element solution of \eqref{eq:1d-modelPG} with exponential bubble test space. Since the right hand sides of the two systems are the same and equal to $[(f,g_1), \cdots, (f,g_{n-1}) ]^T$, and  $M_{fe}^e$ is invertible, we conclude that $u_j=u(x_j)$, $j=1,2,\cdots,n-1$.
\end{proof}

 As a consequence  of Theorem \ref{uFF},  using Remark \ref{rem:FE-FDnorm} and a similar technique used for obtaining  \eqref{eq:NormFh}, we state  the following result. 
\begin{theorem} \label{th:EFd-I}
Let $f \in \C^{(m+1)}([0, 1])$ and assume that an  upwinding  FD  method is obtained from the exponential PG FE method by using a quadrature of order $O(h^{m+1})$, on each mesh interval,  to  approximate the dual vector $[(f,g_1), \cdots, (f,g_{n-1}) ]^T$.  Then 
\begin{equation}\label{udiffm}
\|I_h(u)-u_h^{FD}\|_{*,h} \leq \, C\, h^m, 
\end{equation}
where $C$ is a constant that depends on $f, B$, and their derivatives. 
\end{theorem}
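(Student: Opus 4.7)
The plan is to combine Theorem \ref{uFF} with the quadrature-error estimate and the functional-comparison machinery established in Remark \ref{rem:u2-u1}, following the same template that produced \eqref{eq:NormFh} and \eqref{udiff} for the trapezoid case.

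First I would invoke Theorem \ref{uFF} to identify $u_h^{FE}$ with $I_h(u)$. Under the hypothesis of the statement, the FD method shares its stiffness matrix with the exponential-bubble PG method (see \eqref{eq:M4CDfeEB}) and differs from it only in the right hand side, via the chosen quadrature $Q_h$ in place of exact integration on the dual vector. Using the reformulation \eqref{eq:1d-modelPGR}, both systems can be written as a single bilinear form $b_d(\cdot,\cdot)$ with $d = h/g_0$ acting on $\M_h \times \M_h$, with respective RHS functionals $F_{FE}(w_h) = (f,v_h)$ and $F_{FD}(w_h) = Q_h(f\,v_h)$, where $v_h = \sum\beta_j g_j \in V_h$ is the test function associated to $w_h = \sum\beta_j\varphi_j \in \M_h$. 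Remark \ref{rem:u2-u1} then yields
\[
\|I_h(u) - u_h^{FD}\|_{*,h} = \|F_{FE} - F_{FD}\|_{\M_h^*}.
\]

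Next I would bound this dual norm using the assumed quadrature error. Splitting the integral and quadrature as sums of local contributions, the functional difference takes the form
\[
(F_{FE} - F_{FD})(w_h) = \sum_{i=1}^{n} E_i(f\,v_h), \qquad E_i(\psi) := \int_{x_{i-1}}^{x_i}\psi\, dx - Q_h^{i}(\psi).
\]
The per-interval bound $|E_i(f v_h)| \le C\, h^{m+1}\,\|f v_h\|_{\C^{m+1}[x_{i-1},x_i]}$, which follows from the $O(h^{m+1})$ quadrature order and the assumption $f\in \C^{m+1}$, together with the explicit form \eqref{expB} of the bubble (so that derivatives of $v_h$ on each subinterval are controlled by the nodal jumps $\beta_i - \beta_{i-1}$), yields after summing over the $n = 1/h$ intervals a bound of the form $|(F_{FE} - F_{FD})(w_h)| \le C\, h^{m}\, |w_h|$. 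Here I use the standard identity $|w_h|^2 = \sum_i (\beta_i-\beta_{i-1})^2/h$ for continuous piecewise linear functions to pass from nodal jumps to the $H^1$-seminorm. Taking the supremum over $v_h \in V_h$ and recalling the optimal trial norm representation \eqref{eq:COptNorm-hBe} gives the claimed estimate.

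The main obstacle will be making the constant in the per-interval quadrature bound explicit while keeping the $\varepsilon$-dependence absorbed correctly. Since $v_h$ inherits from $B$ the exponential factor $e^{-x/\varepsilon}$ (see \eqref{expB}), its high-order derivatives carry factors of $1/\varepsilon$, so $\|(f v_h)^{(m+1)}\|_{L^\infty[x_{i-1},x_i]}$ blows up as $\varepsilon \to 0$. The statement accommodates this by allowing $C$ to depend on $B$ and its derivatives; an $\varepsilon$-robust version would require either an $\varepsilon$-adapted quadrature or explicit cancellations in the error expansion between $f$ and the decaying exponential inside $v_h$. A secondary, routine verification is that the decomposition $v_h|_{[x_{i-1},x_i]} = \varphi\text{-part} + (\beta_i-\beta_{i-1})\,B_i$ holds on each mesh subinterval, which is immediate from the definition of $V_h$ in \eqref{eq:VhE}.
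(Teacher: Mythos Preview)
Your proposal is correct and follows precisely the route the paper sketches: invoke Theorem~\ref{uFF} to identify $u_h^{FE}=I_h(u)$, then use Remark~\ref{rem:u2-u1} to reduce to a dual-norm bound on the quadrature error, mirroring the derivation of \eqref{eq:NormFh}. One small slip: the diffusion coefficient is $d=\dfrac{h}{2g_0}$ (cf.\ \eqref{eq:COptNorm-hBe}), not $h/g_0$, but this does not affect the argument.
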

We note that $|v_h(x)| \leq  |v_h|$ for all $x \in [0, 1]$ and all $v_h \in \M_h$,  and from the representation \eqref{eq:COptNorm-hBe}  of the discrete optimal norm $\|\cdot\|_{*,h}$, we have 
\[
\|v_h\|_{\infty,h}:= \max_{i=\overline{0,n}}|v_h(x_i)| \leq |v_h| \leq \frac{2g_0}{h} \|v_h\|_{*,h}, \ \text{for all} \ v_h\in \M_h.
\]

As a consequence of the Theorem \ref{th:EFd-I}  and the above estimate, we obtain the following result.

\begin{corollary}
Under the assumptions of Theorem \ref{th:EFd-I} we have
\begin{equation}\label{udiffm2}
\|I_h(u)-u_h^{FD}\|_{\infty,h} \leq \, 2g_0\, C\, h^{m-1},
\end{equation}
where $C$ is the constant used for  \eqref{udiffm}.
\end{corollary}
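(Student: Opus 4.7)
The plan is to observe that this Corollary is essentially a direct chaining of two already-established inequalities, applied to the discrete error function $v_h := I_h(u) - u_h^{FD}$.

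First, I would note that $v_h \in \M_h$, since both $I_h(u)$ (the nodal linear interpolant of the exact solution on the mesh) and the finite-difference proxy function $u_h^{FD} = \sum u_i \varphi_i$ lie in the standard continuous piecewise-linear space $\M_h = \mathrm{span}\{\varphi_j\}$. This is what lets me invoke the pre-corollary inequality
\[
\|v_h\|_{\infty,h} \;\le\; |v_h| \;\le\; \frac{2 g_0}{h}\,\|v_h\|_{*,h} \quad \text{for all } v_h \in \M_h,
\]
which is stated immediately before the corollary and follows from the representation \eqref{eq:COptNorm-hBe} of the discrete optimal norm (the $(h/(2g_0))^2\,|u_h|^2$ term dominates $|u_h|^2$).

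Second, I would plug in the conclusion of Theorem \ref{th:EFd-I}, namely $\|I_h(u)-u_h^{FD}\|_{*,h} \leq C\,h^m$, with $C$ depending on $f$, $B$, and their derivatives, into the right-hand side of the chained inequality above. Substituting $v_h = I_h(u) - u_h^{FD}$ gives
\[
\|I_h(u)-u_h^{FD}\|_{\infty,h} \;\le\; \frac{2 g_0}{h}\,\|I_h(u)-u_h^{FD}\|_{*,h} \;\le\; \frac{2 g_0}{h}\cdot C\,h^m \;=\; 2 g_0\, C\, h^{m-1},
\]
which is exactly the claimed bound.

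Since the proof is a two-line chain, there is really no main obstacle; the only thing worth checking carefully is that the factor $h$ in the denominator of the inequality $|v_h| \le (2g_0/h)\|v_h\|_{*,h}$ is responsible for the loss of one order ($h^m \to h^{m-1}$), and that the constant $2g_0$ carries through cleanly. Since $g_0 = \tanh(\mathcal{P}e) \in (0,1)$, the factor $2g_0$ is uniformly bounded in $\varepsilon$ and $h$, so the estimate is genuinely $O(h^{m-1})$ with an $\varepsilon$-robust prefactor.
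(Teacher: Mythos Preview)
Your proof is correct and matches the paper's approach exactly: the corollary is stated as an immediate consequence of Theorem~\ref{th:EFd-I} together with the chain $\|v_h\|_{\infty,h}\le |v_h|\le (2g_0/h)\|v_h\|_{*,h}$ applied to $v_h=I_h(u)-u_h^{FD}\in\M_h$. The paper does not give a separate proof beyond these two ingredients, which you have identified and combined correctly.
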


%
Numerical tests show that the estimate  \eqref{udiffm2} does  not hold  if  $\varepsilon <<h$, and $u_h^{FD}$ is replaced by  the  computed solution $u_{h,c}^{FD}$. 
This can be justified based on the  error in computing $e^{-\frac{h}{\varepsilon}}$. We note that if 
$\frac{h}{\varepsilon}$ is too large, then $e^{-\frac{h}{\varepsilon}}$ is computed as $0$. For example, for the double precision arithmetic, we have that $e^{-36.05}$ is smaller than the $\epsilon$-machine. Thus,  $1+e^{-\frac{h}{\varepsilon}}$ is computed as $1$  for $\frac{h}{\varepsilon} \geq 36.05$. Using standard calculus limits, we have 
 that for $\frac{\varepsilon}{h} \to 0$, 
\[ 
g_0 \to 1, \ \text{and} \  g_j =\varphi_j +B_j-B_{j+1} \to \chi_{|_{[x_{j-1}, x_j]}}. \ \text{Consequently,  for} \ \frac{\varepsilon}{h} \to 0
\]
\[
M_{fe}^e \to  tridiag(-1, 1, 0), \ \text{and} \ (f,g_j) \to  \int_{x_{j-1}}^{x_j} f(x) \, dx. 
\]
Based on these observations,  the computed  matrix $M_{fe}^e$ becomes \\  $tridiag(-1, 1, 0)$, if $\varepsilon <<h$. 
Using a high order quadrature  to estimate the dual vector of the exponential bubble PG method leading  to an upwinding FD method, we can get a very  accurate approximation of  $(f,g_j) \approx \int_{x_{j-1}}^{x_j} f(x) \, dx$,  especially  if $f$ is for example a polynomial function. 
Thus, the  computed linear system  is  very close or identical to the system 
\[
 [tridiag(-1, 1, 0)] \, U = \left [ \int_{x_{0}}^{x_1} f(x) \, dx, \cdots,  \int_{x_{n-2}}^{x_{n-1}} f(x) \, dx \right ]^T.
\]
The system  can be solved exactly to obtain
\[
u_j = \int_{0}^{x_j} f(x) \, dx, \ j=1,2,\cdots,n-1. 
\]
This implies that, when $\varepsilon <<h$,  the component $u_j$  of the computed PG discrete solution is very close or identical to the value $w(x_j)$ where $w(x)=  \int_{0}^{x} f(t) \, dt$. By decreasing $h$, as long as  $ \frac{\varepsilon}{h}$ is still very small, the computed discrete solution remains close  to the interpolant of $w$ on $[0, x_{n-1}]$. Thus, by taking the discrete infinity norm on the nodes $x_1,\cdots,x_{n-1}$,  we have
\[
\|I_h(u)-u_{h,c}^{FD}\|_{\infty,h} \approx \|I_h(u)-I_h(w)\|_{\infty,h}= \| I_h(u-w) \|_{\infty,h},  
\]
and the difference  $ \| I_h(u-w) \|_{\infty,h}$  approaches $\|(u -w)_{|_{[x_1, x_{n-1}]}} \|_{\infty}$ for $h\to 0$. 

Consequently, the error $\|I_h(u)-u_{h,c}^{FD}\|_{\infty,h}$ is less sensitive to changes in $h\to 0$, as long as $\frac{\varepsilon}{h}$ is very small, and \eqref{udiffm2}  cannot be checked numerically. 

We discretized \eqref{eq:1d-model} for $\kappa=1$, $f(x) =2x$, $\epsilon =10^{-6}$, and for the exponential bubble PG method. We used the Gaussian quadrature  $G_3$, with three nodes to locally approximate the dual vector. For all  values of \\ $h= \frac{1}{100},  \frac{1}{200},  \frac{1}{400},  \frac{1}{800},  \frac{1}{1600}$, we obtained
\[
\|I_h(u)-u_{h,c}^{FD}\|_{\infty,h} \approx 2 \times 10^{-6}.
\]
In conclusion, if the upwinding PG with exponential bubble, or its FD versions are implemented, then we should avoid choosing  $h >>\varepsilon$. For a given $\varepsilon$, we can choose for example $h \leq 30 \varepsilon$. 

 \bibliographystyle{plain} 
 
\def\cprime{$'$} \def\ocirc#1{\ifmmode\setbox0=\hbox{$#1$}\dimen0=\ht0
  \advance\dimen0 by1pt\rlap{\hbox to\wd0{\hss\raise\dimen0
  \hbox{\hskip.2em$\scriptscriptstyle\circ$}\hss}}#1\else {\accent"17 #1}\fi}
  \def\cprime{$'$} \def\ocirc#1{\ifmmode\setbox0=\hbox{$#1$}\dimen0=\ht0
  \advance\dimen0 by1pt\rlap{\hbox to\wd0{\hss\raise\dimen0
  \hbox{\hskip.2em$\scriptscriptstyle\circ$}\hss}}#1\else {\accent"17 #1}\fi}

 \end{document}